\DeclarePairedDelimiter{\abs}   {\lvert } {\rvert }
\DeclareRobustCommand{\stirling}{\genfrac\{\}{0pt}{}}
\definecolor{webgreen}{rgb}{0,.5,0}
\definecolor{webbrown}{rgb}{.6,0,0}
\newcommand{\seqnum}[1]{\href{https://oeis.org/#1}{\underline{#1}}}
\theoremstyle{plain}
\newtheorem{theorem}{Theorem}
\newtheorem{corollary}[theorem]{Corollary}
\newtheorem{lemma}[theorem]{Lemma}
\theoremstyle{definition}
\theoremstyle{remark}
\newtheorem{remark}[theorem]{Remark}
\numberwithin{equation}{section}
\title{On Colored Factorizations}
\author{Jacob Sprittulla \\ sprittulla@alice-dsl.de}
\date{\today}
\begin{document}
	
\begin{center}
	\vskip 1cm{\LARGE\bf  On Colored Factorizations\\
	\vskip 1cm}
	\large
	Jacob Sprittulla\\
	Germany\\
	\href{mailto:sprittulla@alice-dsl.de}{\tt sprittulla@alice-dsl.de}
\end{center}

\vskip .2 in
	
\begin{abstract}
	We study the number of factorizations of a positive integer, where the parts of the factorization are of $l$ different colors (or kinds). Recursive or explicit formulas are derived for the case of unordered and ordered, distinct and non-distinct factorizations with at most and exactly $l$ colors. 
\end{abstract}

\section{Introduction}
	For $l \geq 1$ and $n \geq 2$ we denote the following factorizitation counting functions, where the parts of the factorization are always $\geq 2$:  
	
	\begin{itemize}
		\item $A_l(n)$ denotes the number of \textit{unordered} factorizations into parts of \textit{at most} $l$ different colors (or into parts of at most $l$ different kinds),
		\item $B_l(n)$ denotes the number of \textit{unordered} factorizations into \textit{distinct} parts of \textit{at most} $l$ colors (more precisely, parts are considered to be distinct, if they have the same color but a different value or if they have a different color),
		\item $\tilde{A}_l(n)$ denotes the number of \textit{ordered} factorizations into parts of \textit{at most} $l$ colors.
		\item $\tilde{B}_l(n)$ denotes the number of \textit{ordered} factorizations into \textit{distinct} parts of \textit{at most} $l$ colors.
	\end{itemize}
    
    We will denote the corresponding factorization counting functions into parts with \textit{exactly} $l$ colors by lowercases. For example $a_l(n)$ denotes the number of \textit{unordered} factorizations into parts of \textit{exactly} $l$ colors, and analogously for $b_l(n)$, $\tilde{a}_l(n)$ and $\tilde{b}_l(n)$. 
	
	We will sometimes also briefly refer to $A_l(n)$ as \textit{the number of $l$-colored factorizations} and analogously for the other factorization counting functions.
	
	We give an example. Let $n=12=(2^2)(3)$ and $l=2$. We list all factorizations and the values of the factorization counting functions below.
	\begin{align*}
		A_2(12) = 16 = \# \{ &(12),(\bar{12}),  (6,2), (6,\bar{2}), 
		(\bar{6},2), (\bar{6},\bar{2}), (4,3), (4,\bar{3}), (\bar{4},3), (\bar{4},\bar{3}),    \\
		&(3,2,2), (\bar{3},\bar{2},\bar{2}), (3,2,\bar{2}), (\bar{3},2,\bar{2}), (3,\bar{2},\bar{2}), (\bar{3},2,2) \} \text{,} \\
		B_2(12) = 12 = \#\{&(12), (\bar{12}), (6,2), (6,\bar{2}),
		(\bar{6},2), (\bar{6},\bar{2}),  (4,3),(4,\bar{3}), (\bar{4},3), (\bar{4},\bar{3}),    \\
		& (3,2,\bar{2}),(\bar{3},2,\bar{2})\} \text{,} \\ 
		a_2(12) =8 = \# \{ &(6, \bar{2}), (\bar{6},2), (4,\bar{3}),
		(\bar{4},3), (3,2,\bar{2}), (\bar{3},2,\bar{2}), (3,\bar{2},\bar{2}), 
		(\bar{3},2,2) \} \text{,} \\
		b_2(12) =6 = \#\{&(6,\bar{2}),(\bar{6},2),  (4,\bar{3}),
		(\bar{4},3), (3,2,\bar{2}), (\bar{3},2,\bar{2})\} \text{,} \\  
		\tilde{a}_2(12)  =26 = \#\{& (6,\bar{2}), (\bar{2},6),
		(\bar{6},2), (2,\bar{6}), (4,\bar{3}), (\bar{3},4), (\bar{4},3), (3,\bar{4}),  \\ 
		&(3,2,\bar{2}),(2,3,\bar{2}),(2,\bar{2},3),(3,\bar{2},2),(\bar{2},3,2),(\bar{2},2,3), \\
		&(\bar{3},2,\bar{2}), (2,\bar{3},\bar{2}), (2,\bar{2},\bar{3}), (\bar{3},\bar{2},2), (\bar{2},\bar{3},2),(\bar{2},2,\bar{3}),   	 \\
		&(\bar{3},2,2), (2,\bar{3},2), (2,2,\bar{3}), 
		(\bar{3},\bar{2},2), (\bar{3},2,\bar{2}), 
		(2,\bar{3},\bar{3}) \} \text{,} \\
		\tilde{b}_2(12)  =20 = \#\{& (6,\bar{2}), (\bar{2},6),
		(\bar{6},2), (2,\bar{6}), (4,\bar{3}), (\bar{3},4), (\bar{4},3), (3,\bar{4}),  \\ 
		&(3,2,\bar{2}),(2,3,\bar{2}),(2,\bar{2},3),(3,\bar{2},2),(\bar{2},3,2),(\bar{2},2,3), \\
		&(\bar{3},2,\bar{2}), (2,\bar{3},\bar{2}), (2,\bar{2},\bar{3}), (\bar{3},\bar{2},2), (\bar{2},\bar{3},2),(\bar{2},2,\bar{3}) \}  
		\text{.}
	\end{align*}
	Here $\#$ denotes the number of elements of a set and $(d,\bar{d})$ denote the two colors of an integer $d$. For completeness, we also note that $\tilde{A}_2(12) = 42$ and $\tilde{B}_2(12) = 30$. For small values of $l$, some of these sequences can be found in the \textit{On-Line Encyclopedia of Integer Sequences} (OEIS) \cite{OEIS}, for example \seqnum{A301830} ($(A_2(n))_{n \geq 1}$) and \seqnum{A328706} ($(B_2(n))_{n \geq 1}$).
	
	For $l=1$, we get the standard (uncolored) factorization counting functions, which we will denote by $f(n):=A_1(n)$, $g(n):=B_1(n)$, $\tilde{f}(n):=\tilde{A}_1(n)$ and $\tilde{g}(n):=\tilde{B}_1(n)$. 	
	
	It is easy to see that all the above functions are \textit{prime independent}, meaning that their value is completely determined by the prime signature of $n$. For example, we have $A_2(12)=A_2(75)=16$, since $12=(2^2)(3)$ and $75=(3)(5^2)$ share the same prime signature $(2,1)$. 
	
	We will denote primes by $\pi$ and $\pi_i$ ($i=1,2,\dots)$. Evaluated at prime powers $\pi^n$, we get the corresponding partition or composition functions. For example $A_l(\pi^n)$ is the number of partitions of $n$ into parts of at most $l$ kinds, $B_l(\pi^n)$ the number of compositions of $n$ into parts of at most $l$ kinds. For small values of $l$, these sequences can be found in the OEIS, for example \seqnum{A000712} ($(A_2(\pi^n))_{n \geq 1}$), \seqnum{A022567} ($(B_2(\pi^n))_{n \geq 1}$), \seqnum{A025192} ($(\tilde{A}_2(\pi^n))_{n \geq 1}$) and \seqnum{A032005} ($(\tilde{B}_2(\pi^n))_{n \geq 1}$).
	
	The number of $l$-colored factorizations doesn't seem to have attracted much attention in the literature yet. The only reference we found is the paper of Subbarao \cite{Sub04}, where the Dirichlet generating function (dgf) of $A_l(n)$ is given. The aim of this paper is to study the recursive structure and the interdependence of these functions. We will give recursive or explicit formulas and the dgf's for all variants mentioned. Further, we derive an average order for the number of $l$-colored ordered factorizations (Corollary \ref{co:avor}).

\section{Unordered colored factorizations}
	In the case of $l$-colored unordered factorizations, the recursive structure of the factorization counting functions can be deduced with standard methods from their dgf's.  
	Subbarao \cite[Equation 2.4]{Sub04} noted that the dgf of $A_l(n)$ is given by the $l$-th power of the dgf of $f(n)$. We will motivate and generalize this result in the next lemma. 
	
	For an arithmetic function $\phi(n)$, we denote by $\phi^{*l}(n)$ the \textit{$l$-fold Dirichlet convolution} of $\phi$ with itself, i.e. $\phi^{*l}=\phi^{*(l-1)}*\phi$, for $l \geq 2$ and $\phi^{*1}=\phi$, where $*$ denotes Dirichlet convolution. We will also say that $\phi^{*l}$ is the \textit{$l$-th Dirichlet power} of $\phi$. 
	
	Further, we denote by $f_{k,l}(n)$ ($g_{k,l}(n)$) the number of (distinct) unordered $l$-colored factorizations with exactly $k$ parts. These auxiliary functions are needed to deduce formulae for $\tilde{B}_l(n)$ (Theorem \ref{th:tBln} below) and $\tilde{b}_l(n)$. For $l=1$, $f_k(n):=f_{k,1}(n)$ ($g_k(n):=g_{k,1}(n))$ denotes the number of unordered (distinct) factorizations of $n$ with exactly $k$ parts. The dgfs of these sequences are given by
	\begin{align}
		\label{eq:fkdgf}
		1+\sum_{n=2}^\infty \sum_{k=1}^\infty f_{k}(n) n^{-s} z^k
		&= \prod_{n=2}^\infty (1 - z n^{-s})^{-1} \\
		\label{eq:gkdgf}
		1+\sum_{n=2}^\infty \sum_{k=1}^\infty g_{k}(n) n^{-s} z^k
		&= \prod_{n=2}^\infty (1 + z n^{-s}) 
		\text{,}
	\end{align}	
	see Hensley \cite[Equation 1.4]{Hen87} and Subbarao \cite[Equation 2.4]{Sub04}. We denote complex numbers by $s$ and $z$. 
	
	\begin{lemma}
		\label{lm:ufcdgf}
		For an integer $l \geq 1$ and $\operatorname{Re}(s) >1$, we have $A_l=f^{*l}$ and $B_l=g^{*l}$ and therefore
		\begin{align}
			\label{eq:Aldgf}
			\mathcal{A}_l(s) &:= 1 + \sum_{n=2}^\infty A_l(n) n^{-s} 
			=\prod_{n=2}^\infty (1 - n^{-s})^{-l}  \\
			\label{eq:Bldgf}
			\mathcal{B}_l(s) &:= 1 + \sum_{n=2}^\infty B_l(n) n^{-s}
			= \prod_{n=2}^\infty (1 + n^{-s})^l 
			\text{.}
		\end{align}
		Further, for $k,l \geq 1$ and $\operatorname{Re}(s),\operatorname{Re}(z) >1$ we have 
		\begin{align}
			\label{eq:fkldgf}
			\mathcal{F}_{l}(s) 
			&:= 1 + \sum_{n=2}^\infty \sum_{k=1}^\infty f_{k,l}(n) n^{-s} z^k
			= \prod_{n=2}^\infty (1 - z n^{-s})^{-l} \\
			\label{eq:gkldgf}
			\mathcal{G}_{l}(s) 
			&:= 1 + \sum_{n=2}^\infty \sum_{k=1}^\infty g_{k,l}(n) n^{-s} z^k
			= \prod_{n=2}^\infty (1 + z n^{-s})^{l} 
			\text{.}
		\end{align}
	\end{lemma}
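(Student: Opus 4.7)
The plan is to exhibit a color-partitioning bijection that reduces each $l$-colored factorization to an $l$-tuple of uncolored factorizations, and then translate this into a statement about Dirichlet convolutions. Extend $f$ to $n=1$ via $f(1):=1$ (the empty factorization of $1$). Given an unordered $l$-colored factorization of $n$, group its parts by color: the parts of color $i$ form an unordered (uncolored) factorization $\phi_i$ of some integer $n_i \geq 1$, and $n_1 n_2 \cdots n_l = n$; the color $i$ is simply unused when $n_i=1$. Conversely, any such $l$-tuple $(\phi_1,\dots,\phi_l)$ with $\prod n_i = n$ reconstructs a unique $l$-colored factorization by recoloring the parts of each $\phi_i$ with color $i$ and taking the multiset union. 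This bijection yields
\[
A_l(n) \;=\; \sum_{n_1 n_2 \cdots n_l = n} f(n_1) f(n_2) \cdots f(n_l) \;=\; f^{*l}(n),
\]
and the identical argument with distinct-part factorizations gives $B_l=g^{*l}$.

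Next I would pass to Dirichlet generating functions. Since Dirichlet convolution corresponds to multiplication of dgfs, the identity $A_l = f^{*l}$ implies $\mathcal{A}_l(s) = \mathcal{F}_1(s)^l$, where $\mathcal{F}_1(s)$ is the dgf of $f$. Setting $z=1$ in \eqref{eq:fkdgf} gives $\mathcal{F}_1(s) = \prod_{n \geq 2}(1-n^{-s})^{-1}$ for $\operatorname{Re}(s)>1$, and raising to the $l$-th power produces \eqref{eq:Aldgf}. The analogous computation with \eqref{eq:gkdgf} yields \eqref{eq:Bldgf}.

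For the bivariate versions, refine the bijection: if $\phi_i$ has $k_i$ parts, then the total part-count is $k = k_1+\cdots+k_l$. Hence
\[
f_{k,l}(n) \;=\; \sum_{k_1+\cdots+k_l=k}\;\sum_{n_1 \cdots n_l = n} f_{k_1}(n_1)\cdots f_{k_l}(n_l),
\]
which is precisely the coefficient of $z^k n^{-s}$ in the $l$-th power of the bivariate dgf on the right-hand side of \eqref{eq:fkdgf}. Raising \eqref{eq:fkdgf} to the $l$-th power therefore gives \eqref{eq:fkldgf}, and the same manipulation on \eqref{eq:gkdgf} gives \eqref{eq:gkldgf}.

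The only real subtlety is booking the convention $f(1)=g(1)=1$ so that unused colors ($n_i=1$) are counted correctly by Dirichlet convolution, together with noting absolute convergence of the infinite products for $\operatorname{Re}(s)>1$ (and $|z|$ suitably bounded in the bivariate case), which follows by comparison with the dgf of the divisor-like generating functions already known from \eqref{eq:fkdgf}--\eqref{eq:gkdgf}. Once these are in place, everything else is routine manipulation of Dirichlet series.
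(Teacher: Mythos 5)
Your proof is correct and follows essentially the same approach as the paper: both rest on grouping the parts of a colored factorization by color to express $A_l$ (resp.\ $B_l$, $f_{k,l}$, $g_{k,l}$) as a Dirichlet convolution of uncolored counting functions, then passing to generating functions. The only difference is cosmetic --- you unroll into an $l$-fold convolution in one step, where the paper peels off one color at a time and inducts on $l$.
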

	\begin{proof}
		Let $n \geq 2$ be given. Any unordered $l$-colored factorization of $n$ can be constructed of all pairs if integers $n_1$ and $n_2$ with $n_1 n_2 = n$, by considering factorizations of the first $(l-1)$ colors of $n_1$ and factorizations of the $l$-th color of $n_2$; hence $A_l(n)=\sum_{n_1 n_2=n} A_{l-1}(n_1) f(n_2)=(A_{l-1} * f)(n)$. This shows \eqref{eq:Aldgf} by induction on $l$; the proof of \eqref{eq:Bldgf} is similar.
		
		Analogously, all factorizations with exactly $k$ parts of at most $l$ colors can be constructed of all pairs of integers $n_1 n_2=n$ and $k_1+k_2=k$, where $n_1$ is made of $k_1$ parts of the first $l-1$ colors and $n_2$ is made of $k_2$ parts of the last $l$-th color. Therefore 
		\begin{align*}
			\sum_{n=2}^\infty \sum_{k=1}^\infty f_{k,l}(n) n^{-s} z^k
			&= \sum_{n=2}^\infty \sum_{k=1}^\infty \left( \sum_{n_1 n_2=n} 
			\sum_{k_1+k_2=k} f_{k_1,l-1}(n_1) f_{k_2,1}(n_2) \right)
			n^{-s} z^k \\
			&=\sum_{n_1,n_2=2}^\infty \sum_{k_1,k_2=1}^\infty 
			f_{k_1,l-1}(n_1) f_{k_2,1}(n_2) 
			{n_1}^{-s} z^{k_1} {n_2}^{-s} z^{k_2} \\
			&=\left( \sum_{n=2}^\infty \sum_{k=1}^\infty 
			f_{k,l-1}(n) n^{-s} z^k \right)
			\left( \sum_{n=1}^\infty \sum_{k=1}^\infty 
			f_{k,1}(n) n^{-s} z^k \right)
			\text{.}
		\end{align*}	
		It follows by induction on $l$ that $\mathcal{F}_{k,l}(s)$ is the $l$-th power of the the dgf of $f_{k}(n)$. Together with \eqref{eq:fkdgf}, this shows \eqref{eq:fkldgf}. The proof of \eqref{eq:gkldgf} is similar, using \eqref{eq:gkdgf}.
    \end{proof}
	
	\begin{remark}
		We can change the set admissible parts $\mathcal{S}=\{2,3,4,5,\dots\}$ in Lemma \ref{lm:ufcdgf} to any other nonempty subset of $\mathcal{S}$ by changing the range of the products of the dgfs accordingly. For example, if we put $\mathcal{S}=\{2,3,5,7\dots\}$, the set of all primes, we get the dgfs of the number of $l$-colored \textit{prime factorizations} of $n$.
		By the Euler product of the Riemann zeta function, we then get that this dgf equals $\zeta(s)^l$. But $\zeta(s)^l$ is also the dgf of the number of ordered factorizations into $l$ parts $\geq 1$ (the $l$-th divisor function, denoted by $d_l(n)$). Therefore, we can conclude that, for every $n \geq 2$, the number of $l$-colored unordered prime-factorizations equals the number of ordered factorizations with $l$ parts $\geq 1$, since both sequences share the same dgf $\zeta(s)^l$.
		A similar argument shows that the number of ordered factorizations into $l$ parts $\geq 2$ (denoted by $f_l(n)$) equals the number of exactly $l$-colored unordered prime factorizations, since both sequences share the same dgf $(\zeta(s)-1)^l$ (see Remark \ref{rm:dgfexact} below). For example, for $n=12$ and $l=2$, we have
		\begin{align*}
			d_2(12) = 6 
			&= \# \{ (12,1), (1,12), (6,2), (2,6), (4,3), (3,4) \}  \\
			&= \# \{ (3,2,2), (\bar{3},2,2), (3,\bar{2},2), 
			(\bar{3},\bar{2},2), (3,\bar{2},\bar{2}), 
			(\bar{3},\bar{2},\bar{2}) \},  \\
			f_2(12) = 4 
			&= \# \{ (6,2), (2,6), (4,3), (3,4) \}  \\
			&= \# \{ (\bar{3},2,2), (3,\bar{2},2), 
			(\bar{3},\bar{2},2), (3,\bar{2},\bar{2}) \} 
			\text{.}
		\end{align*}
	\end{remark}

	\begin{remark}
		\label{rm:algolist}
		An effective algorithm to produce a list of all unordered factorizations for a given $n \geq 2$ can be found in Knopfmacher and Mays \cite{Kno06}. As explained in the proof of Lemma \ref{lm:ufcdgf}, Equation \eqref{eq:Aldgf}, all $l$-colored factorizations of an integer $n$ can be constructed from $(l-1)$-colored factorizations of the divisors $d$ of $n$ and the factorizations of $n/d$. This can be used to produce recursively a list of all unordered $l$-colored factorizations based on a list of all unordered factorizations. This algorithm can be extended to produce lists of factorizations for the other factorization counting functions analyzed in this paper.
	\end{remark}

	We can now derive a recursive equations for $A_l(n)$, $B_l(n)$, $f_{k,l}(n)$ and $g_{k,l}(n)$, by making use of the dgf's.
	\begin{theorem}
		\label{th:ufcrec}
		For $n \geq 2$, $k \geq 1$ and an integer $l \neq 0$, we have 
		\begin{align}
			\label{eq:recA}
			A_l(n) \log n
			&= l \sum_{\substack{ d^i \lvert n \\ d \geq 2  }}
			A_l(n/d^i) \log d  \\
			\label{eq:recB}
			B_l(n) \log n
			&= l \sum_{\substack{ d^i \lvert n \\ d \geq 2  }}
			(-1)^{i+1} B_l(n/d^i) \log d  \\
			\label{eq:recfkl}
			f_{k,l}(n) \log n
			&= l \sum_{\substack{ d^i \lvert n \\ d \geq 2, k \geq i}}
			f_{k-i,l}(n/d^i) \log d \\
			\label{eq:recgkl}
			g_{k,l}(n) \log n
			&= l \sum_{\substack{ d^i \lvert n \\ d \geq 2, k \geq i}}
			(-1)^{i+1} g_{k-i,l}(n/d^i) \log d 
			\text{,}
		\end{align}
		with boundary conditions $A_l(1)=B_l(1)=1$ and 
		\begin{align*}
			f_{k,l}(1) = g_{k,l}(1) &= 
			\begin{cases}
			1, & \text{if $k=0$;} \\
			0, & \text{if $k \geq 1$.} \\
		\end{cases} 	
		\end{align*}
	\end{theorem}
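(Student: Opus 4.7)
The plan is to apply logarithmic differentiation to the Dirichlet generating functions established in Lemma~\ref{lm:ufcdgf}. For~\eqref{eq:recA}, I would expand the logarithm of $\mathcal{A}_l(s)$ using $-\log(1-x)=\sum_{i\geq 1} x^i/i$ to obtain
$$\log \mathcal{A}_l(s) = l \sum_{n \geq 2}\sum_{i \geq 1} \frac{n^{-is}}{i},$$
and then differentiate termwise in $s$, which yields
$$-\mathcal{A}_l'(s) = l\,\mathcal{A}_l(s)\cdot \sum_{m \geq 2}\sum_{i \geq 1}(\log m)\,m^{-is}.$$
The inner double series can be rewritten as a single Dirichlet series $\sum_{k \geq 2} c(k) k^{-s}$ with $c(k)=\sum_{m^i=k,\,m\geq 2}\log m$, so that the right-hand side is $l$ times an ordinary Dirichlet convolution $(A_l * c)(n)$. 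The coefficient of $n^{-s}$ on the left is $A_l(n)\log n$, and comparing coefficients produces~\eqref{eq:recA}.

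The derivation of~\eqref{eq:recB} is identical once the series expansion is replaced by $\log(1+x)=\sum_{i\geq 1}(-1)^{i+1}x^i/i$, whose alternating sign propagates unchanged into the recursion.

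For the bivariate recursions~\eqref{eq:recfkl} and~\eqref{eq:recgkl}, I would differentiate $\mathcal{F}_l$ (respectively $\mathcal{G}_l$) only with respect to $s$, keeping $z$ as a formal parameter. The same computation gives
$$-\partial_s \mathcal{F}_l(s) = l\,\mathcal{F}_l(s)\cdot \sum_{m\geq 2}\sum_{i\geq 1} z^i (\log m)\,m^{-is}.$$
Extracting the coefficient of $z^k n^{-s}$ then produces the convolution on the right of~\eqref{eq:recfkl}: the index $i$ arises as the $z$-power absorbed by the right-hand factor (hence the constraint $1 \leq i \leq k$), while the outer factor contributes $f_{k-i,l}(n/d^i)$. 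The $g_{k,l}$ case is identical up to the alternating sign.

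The boundary conditions are immediate combinatorial observations: the unique factorization of $1$ is empty, with zero parts. The only care needed lies in legitimizing the termwise differentiation and rearrangement of the relevant series, which is granted by absolute convergence for $\operatorname{Re}(s) > 1$ (and $\operatorname{Re}(z) > 1$ in the bivariate setting), together with the rewriting of $\sum_{m,i}(\log m)m^{-is}$ as a Dirichlet series indexed by $k=m^i$ so that Dirichlet convolution applies. I do not expect any genuine obstacle beyond this bookkeeping.
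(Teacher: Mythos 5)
Your proposal is correct and follows essentially the same route as the paper: logarithmic differentiation of the Dirichlet generating functions from Lemma~\ref{lm:ufcdgf}, expansion via $\log(1\pm x)$, regrouping of the double sum $\sum_{m,i}(\log m)\,m^{-is}$ as a Dirichlet series supported on perfect powers (the paper does this with an explicit indicator $\lambda_i$, you with the coefficient $c(k)=\sum_{m^i=k}\log m$ --- the same device), and coefficient extraction, with the $z$-power bookkeeping handling the bivariate cases identically.
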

	\begin{proof}
		Fix an integer $l \neq 0$. We proof \eqref{eq:recB} by using logarithmic differentiation of the dgf and the identity 
		$\log(1+x) = \sum_{i=1}^\infty \tfrac{1}{i} (-1)^{i+1} x^i$
		(for $\abs x < 1$). For positive integers $i$ and $n$, we denote by $\lambda_i(n)$ the indicator function of the $i$-th power, i.e.
		\begin{align*}
			\lambda_i(n) &:= 
			\begin{cases}
				1, & \text{if $n=d^i$, for some integer $d$;}\\
				0, & \text{else.}
			\end{cases} 
		\end{align*}
		We have
		\begin{align*}
			\tfrac{\partial}{\partial s}  \log \mathcal{B}_l(s)
			&= \tfrac{\partial}{\partial s} \left(
			l \sum_{n=2}^\infty \log(1 + n^{-s}) \right) \\
			&= \tfrac{\partial}{\partial s} \left(
			l \sum_{n=2}^\infty \sum_{i=1}^\infty 
			\tfrac{1}{i} (-1)^{i+1} n^{-is} \right) \\
			&= -l \sum_{n=2}^\infty \sum_{i=1}^\infty 
			(-1)^{i+1} n^{-is} \log n  \\
			&= -l \sum_{n=2}^\infty \sum_{i=1}^\infty 
			(-1)^{i+1} \lambda_i(n) n^{-s} \log(n^{1/i})
			\text{.}
		\end{align*}
		By the definition of $\mathcal{B}_l(s)$, we also have
		\begin{align*}
			\tfrac{\partial}{\partial s}  \log \mathcal{B}_l(s)
			&= \frac {\tfrac{\partial}{\partial s} \mathcal{B}_l(s)}{ \mathcal{B}_l(s)  }  \\
			&= \frac{-1}{\mathcal{B}_l(s)}
			\sum_{n=2}^\infty B_l(n)  n^{-s} \log n
			\text{.}
		\end{align*}
		Equating both equations for the logarithmic derivative, we get
		\begin{align*}
			\sum_{n=2}^\infty B_l(n)  n^{-s} \log n
			&= l \left( \sum_{n=2}^\infty B_l(n)  n^{-s} \right)
			\left( \sum_{n=2}^\infty \sum_{i=1}^\infty 
			(-1)^{i+1} \lambda_i(n) n^{-s} \log(n^{1/i}) \right)  \\
			&= l \sum_{n=2}^\infty n^{-s} 
			\sum_{\substack{ d \lvert n \\ d \geq 2  }} 
			(-1)^{i+1}  B_l(n/d) \lambda_i(d) \log(d^{1/i})  \\
			&= l \sum_{n=2}^\infty n^{-s} 
			\sum_{\substack{ d^i \lvert n \\ d \geq 2  }} 
			(-1)^{i+1} B_l(n/d^i) \log d
			\text{.}
		\end{align*}
		Extracting coefficients on both sides of this equation, we get \eqref{eq:recB}. Analogously, by using the identity 
		$\log(1-x) = -\sum_{i=1}^\infty \tfrac{1}{i} x^i$
		(for $\abs x < 1$), we get \eqref{eq:recA}.

		Next, we prove \eqref{eq:recgkl}. As in the proof of \eqref{eq:recB}, we get
		\begin{align*}
			\tfrac{\partial}{\partial s}  \log \mathcal{G}_{l}(s)
			&= \tfrac{\partial}{\partial s} \left(
			l \sum_{n=2}^\infty \log(1 + z n^{-s}) \right) \\
			&= -l \sum_{n=2}^\infty \sum_{i=1}^\infty 
			(-1)^{i+1} z^i \lambda_i(n) n^{-s} \log(n^{1/i})
			\text{}
		\end{align*}
		and
		\begin{align*}
			\tfrac{\partial}{\partial s}  \log \mathcal{G}_l(s) 
			&= \frac{-1}{\mathcal{G}_l(s)}
			\sum_{k=1}^\infty z^k  \sum_{n=2}^\infty  g_{k,l}(n)  n^{-s} \log n
			\text{.}
		\end{align*}
		Equating these two expressions, we get (by absolute convergence)
		\begin{align*}
			\sum_{n=2}^\infty  \sum_{k=1}^\infty g_{k,l}(n)  n^{-s} z^k  \log n
			&= \left(  \sum_{n=2}^\infty \sum_{k=1}^\infty 
			g_{k,l}(n) n^{-s} z^k \right)
			\left( l \sum_{n=2}^\infty \sum_{i=1}^\infty 
			(-1)^{i+1} z^i \lambda_i(n) n^{-s} \log(n^{1/i})  \right) \\
			&= l \sum_{k=1}^\infty \sum_{i=1}^\infty z^k z^i
			\left(  \sum_{n=2}^\infty n^{-s}
			\sum_{\substack{ d \lvert n \\ d \geq 2  }} 
			(-1)^{i+1}  g_{k,l}(n/d) \lambda_i(d) \log(d^{1/i})  \right) \\			
			&= l \sum_{k=1}^\infty  \sum_{n=2}^\infty n^{-s} z^k 
			\sum_{\substack{ d \lvert n \\ d \geq 2  }} \sum_{i=1}^k
			(-1)^{i+1}  g_{k-i,l}(n/d) \lambda_i(d) \log(d^{1/i})  
			\text{.}
		\end{align*}	
		This shows \eqref{eq:recgkl} by extracting coefficients. The proof of \eqref{eq:recfkl} is similar.
	\end{proof}
	
	\begin{remark}
		The parameter $l$ in Theorem \ref{th:ufcrec} is not restricted to positive integers. For $l=-1$, we get recursive equations for the \textit{generalized M\"{o}bius functions} $\mu_f(n)$ and $\mu_g(n)$ defined by
		\begin{align}
			\mathcal{B}_{-1}(s)
			&=1+\sum_{n=2}^{\infty} \mu_f(n) n^{-s}
			= \prod_{n=2}^{\infty} ( 1+n^{-s})^{-1} \\
			\mathcal{A}_{-1}(s) 
			&=1+\sum_{n=2}^{\infty} \mu_g(n) n^{-s}
			= \prod_{n=2}^{\infty} ( 1-n^{-s}) 
			\text{.}
		\end{align}
		The function $\mu_f(n)$ ($\mu_g(n)$) counts the surplus of unordered (distinct) factorizations with an even number of parts over the unordered (distinct) factorizations with an odd number of parts, see Chamberland et al. \cite[Chapter 5]{Cha13} and Subbarao \cite[Section 6.3]{Sub04}.  
		The term \textit{generalized M\"{o}bius functions} is motivated by the observation that the M\"{o}bius function 
		\begin{align*}
			\mu(n) &= 
			\begin{cases}
				(-1)^{\omega(n)}, & \text{if $n$ is squarefree;}\\
				0, & \text{otherwise,}
			\end{cases} 
		\end{align*}	
		counts the surplus of the number of ordered factorizations with an even number of parts over the number of ordered factorizations with an odd number of parts, see Friedlander and Iwaniec \cite[Equation 17.2]{Fri10}. We denote by $\omega(n)$ the number of distinct prime factors of $n$.
    \end{remark}

	If $n$ is square free, we can derive an explicit formula for the number of $l$-colored unordered (distinct) factorizations of $n$. We denote by $\stirling{n}{k}$ the Stirling numbers of the second kind.
	\begin{theorem}
		Let $n=\prod_{i=1}^m \pi_i$ be a product of distinct primes and $l \geq 1$. Then
		\begin{align*}
			A_l(n) = B_l(n) =\sum_{k=1}^m l^k \stirling{m}{k} 
			\text{.}
		\end{align*}
	\end{theorem}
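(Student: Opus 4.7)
The plan is to reduce the count to a bijection with set partitions of $\{1,\ldots,m\}$, exploiting the squarefree structure of $n$. First I would observe that when $n = \pi_1 \cdots \pi_m$ is squarefree, every unordered factorization of $n$ into parts $\geq 2$ automatically has pairwise distinct parts: if two parts $d_i$ and $d_j$ with $i \neq j$ coincide, then $d_i^2 \mid n$, and since $d_i \geq 2$ is divisible by some prime $\pi$, this forces $\pi^2 \mid n$, contradicting squarefreeness. Because an $l$-colored factorization is said to have distinct parts whenever no two parts coincide in both value and color, and the underlying integer values are already forced to be distinct, every $l$-colored factorization of a squarefree $n$ automatically has distinct parts. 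In particular $A_l(n) = B_l(n)$, so it suffices to compute $A_l(n)$.

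Next, I would set up a bijection between uncolored unordered factorizations of $n = \pi_1 \cdots \pi_m$ into exactly $k$ parts and set partitions of $\{1,\ldots,m\}$ into exactly $k$ non-empty blocks. Given such a factorization $(d_1,\ldots,d_k)$, each prime $\pi_i$ divides exactly one part $d_j$ by unique factorization together with squarefreeness, and the assignment $i \mapsto j$ defines a partition of $\{1,\ldots,m\}$ into $k$ blocks. Conversely, a set partition $\{I_1,\ldots,I_k\}$ produces the factorization with parts $d_j = \prod_{i \in I_j} \pi_i$, all of which are $\geq 2$ since each $I_j$ is non-empty. By definition, the number of such set partitions is $\stirling{m}{k}$.

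Finally, since the parts of any such uncolored factorization are pairwise distinct as integers, each of the $k$ parts can independently be assigned one of $l$ colors, producing exactly $l^k$ distinct $l$-colored factorizations (they remain distinguishable because the underlying values already are). Summing over $k = 1, \ldots, m$, which bounds the number of parts $\geq 2$ in any factorization of a squarefree integer with $m$ prime factors, yields
\[
A_l(n) = B_l(n) = \sum_{k=1}^m l^k \stirling{m}{k}\text{.}
\]
There is no real obstacle; the only subtlety is the initial observation that the distinct-parts condition is vacuous in the squarefree case, after which the statement reduces to the standard combinatorial interpretation of the Stirling numbers of the second kind.
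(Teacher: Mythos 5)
Your proof is correct and follows essentially the same route as the paper: identify factorizations of a squarefree $n$ into $k$ parts with set partitions of an $m$-set into $k$ blocks (counted by $\stirling{m}{k}$), multiply by $l^k$ for the color assignments, and sum over $k$. You are in fact slightly more explicit than the paper on two points it leaves implicit --- that distinctness of parts is automatic in the squarefree case (hence $A_l(n)=B_l(n)$) and that the $l^k$ colorings are genuinely inequivalent because the parts are already distinct as integers.
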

	\begin{proof}
		Let an integer $m \geq 1$ be given. For an integer $1 \leq k \leq m$, the number of unordered (distinct) factorizations of $n$ with exactly $k$ parts is given by $\stirling{m}{k}$. Denote the parts of such a factorization by $d_1, \dots, d_k$. By the structure of $n$, we can assume that $d_1 > \cdots > d_k$. Every $l$-colored factorization of $n$ with parts $d_1, \dots, d_k$ can be constructed by assigning $\alpha_i$ of the parts to the $i$-th color ($i=1,\dots,l$), with $\alpha_1 + \dots + \alpha_l = k$ and $\alpha_i \geq 0$. This shows that for every such partition $\alpha_1,\dots,\alpha_l$ there are $\binom{m}{\alpha_1 \cdots \alpha_l}$ $l$-colored factorizations. By the multinomial theorem, we have
		\begin{align*}
			\sum_{\alpha_1 + \dots + \alpha_l = k} \binom{m}{\alpha_1 \cdots \alpha_l} = l^k
			\text{.}
		\end{align*}
	    This shows that the number of $l$-colored unordered (distinct) factorizations with $k$ parts is given by $l^k \stirling{m}{k}$. The claim now follows by summing up over $k$.
	\end{proof}
	Let $Q_m$ denote the $m$-th primorial. For small values of $l$, the sequences $(A_l(Q_m))_{m \geq 1}$ can be found in the OEIS, e.g. \seqnum{A001861} ($l=2$) and \seqnum{A027710} ($l=3$).

\section{Ordered colored factorizations}
	For $l$-colored ordered factorizations, a simple combinatorial argument shows that the value of $\tilde{A}_l(n)$ can be derived from the number of factorizations with exactly $k$ different parts, denoted by $\tilde{f}_k(n)$. Recall that for $n=\prod_{j=1}^{\omega} \pi_j^{e_j}$ and $k \geq 1$, we have
	\begin{align}
		\label{eq:dgftf}
		\tilde{\mathcal{F}_k} (s) &:= 
		\sum_{n=1}^\infty \tilde{f}_k(n) n^{-s} = (\zeta(s)-1)^k  
	\end{align}
	and
	\begin{align}
		\label{eq:fortf}
		\tilde{f}_k(n) &= \sum_{i=0}^{k-1} (-1)^{i} \binom{k}{i}  
		\prod_{j=1}^{\omega} \binom{e_j + k-i-1}{e_j}
		\text{,}
	\end{align}
	see Knopfmacher and Mays \cite[Chapter 2.1]{Kno05}. We will denote by $\Omega=\Omega(n)$ the number of prime factors of $n$, counted with multiplicity.
	
	\begin{theorem}
		For $n \geq 2$ and $l \geq 1$, we have
		\begin{align}
			\label{eq:dgftA}
			\tilde{\mathcal{A}_l} (s) &:= 
			\sum_{n=1}^\infty \tilde{A}_l(n) n^{-s} = 
			\frac{1}{(l+1) - l \zeta(s)} 
			\text{}
		\end{align}
		and
		\begin{align}
			\label{eq:fortA}	
			\tilde{A}_l(n) &= \sum_{k=1}^\Omega l^k \tilde{f}_k(n) 
			\text{.}
		\end{align}
	\end{theorem}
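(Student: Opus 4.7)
The plan is to establish the combinatorial identity \eqref{eq:fortA} first, and then derive the Dirichlet generating function \eqref{eq:dgftA} from it by formal manipulation.

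For \eqref{eq:fortA}, I would argue directly: an ordered $l$-colored factorization of $n$ is obtained by taking an ordered (uncolored) factorization of $n$ into parts $\geq 2$ and assigning one of the $l$ colors to each part independently. Since an ordered factorization of $n$ into $k$ parts can be colored in exactly $l^k$ ways and there are $\tilde{f}_k(n)$ such factorizations, the total count is $\sum_{k \geq 1} l^k \tilde{f}_k(n)$. The sum truncates at $k=\Omega(n)$ because a factorization of $n$ into parts $\geq 2$ has at most $\Omega(n)$ parts, so $\tilde{f}_k(n)=0$ for $k > \Omega(n)$.

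For \eqref{eq:dgftA}, I would substitute \eqref{eq:fortA} into the defining series, interchange the order of summation (justified since for each fixed $n$ only finitely many $k$ contribute), and apply \eqref{eq:dgftf}:
\begin{align*}
\tilde{\mathcal{A}}_l(s)
&= 1 + \sum_{n=2}^\infty \sum_{k=1}^{\Omega(n)} l^k \tilde{f}_k(n) n^{-s}
= 1 + \sum_{k=1}^\infty l^k \sum_{n=1}^\infty \tilde{f}_k(n) n^{-s}
= 1 + \sum_{k=1}^\infty \bigl( l(\zeta(s)-1) \bigr)^k.
\end{align*}
Summing the geometric series (valid for $\operatorname{Re}(s)$ large enough that $|l(\zeta(s)-1)|<1$) yields $1/(1 - l(\zeta(s)-1)) = 1/((l+1) - l\zeta(s))$, as claimed.

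I do not anticipate any serious obstacle: the main subtlety is only to note that the outer sum in the defining Dirichlet series starts at $n=2$ while the series in \eqref{eq:dgftf} starts at $n=1$, which is harmless because $\tilde{f}_k(1) = 0$ for $k \geq 1$. Convergence considerations can be mentioned briefly, since the identity holds as an identity of formal Dirichlet series and as an analytic identity in a right half-plane.
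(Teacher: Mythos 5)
Your proposal is correct and follows essentially the same route as the paper: a direct combinatorial argument showing each ordered $k$-part factorization admits $l^k$ colorings, followed by substitution into the Dirichlet series and summation of the geometric series using \eqref{eq:dgftf}. Your added remarks on the truncation at $\Omega(n)$ and the harmlessness of the $n=1$ term are sensible refinements but do not change the argument.
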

	\begin{proof}
		Fix $n \geq 2$ and $l \geq 1$. Let $d_1 \cdots d_k$ be an ordered factorization of $n$. Every ordered factorization of $n$ with $k$ parts of $l$ kinds has a representation as $d_{1,i_1} \cdots d_{k,i_k}$ ($i_j \in \{1,2,\dots,l\}$ for $j=1,\dots,k$), where the second subscript indicates the kind of the part. This shows \eqref{eq:fortA}, by summing up over $k$.
		
		From \eqref{eq:dgftf},  \eqref{eq:fortA} and the formula for the geometric series we get 
		\begin{align*}
			\tilde{\mathcal{A}_l}(s)
			&=\sum_{k=1}^\infty l^k (\zeta(s)-1)^k 
			=\frac{1}{1-l(\zeta(s)-1)}  
			\text{.}
		\end{align*}
		This shows \eqref{eq:dgftA}.
		 
	\end{proof}	

	\begin{remark}
		It follows from \eqref{eq:dgftA} that the sequence $(\tilde{A}_l(n))_{n\geq 1}$ is the Dirichlet inverse of the sequence $(c_l(n))_{n\geq 1}$ with
		\begin{align*}
			c_l(n) &:= 
			\begin{cases}
			1, & \text{if $n=1$;}\\
			-l, & \text{for $n \geq 2$.}
		\end{cases} 	
		\end{align*} 
	\end{remark}

	\begin{remark}
		Let $Q_m$ denote the $m$-th primorial ($m \geq 1$). Then, we have 
		$\tilde{f}_k(Q_m) = k! \stirling{m}{k}$ 
		and therefore
		\begin{align*}
			\tilde{A}_l(Q_m) = \tilde{B}_l(Q_m) = \sum_{k=1}^m l^k k! \stirling{m}{k}
			\text{.}
		\end{align*}
		The same holds for any other square free number composed of $m$ distinct primes. For small values of $l$, the sequences 
		$(\tilde{A}_l(Q_m))_{m \geq 1}$ 
		can be found in the OEIS, e.g. \seqnum{A004123} ($l=2$) and \seqnum{A032033} ($l=3$).
	\end{remark}

	We can now derive an average order of $\tilde{A}_l(n)$ from its dgf \eqref{eq:dgftA} as a generalization of the formula for the average order of $\tilde{f}(n)$ of Kalm\'{a}r \cite{Kal30}. For real $x > 1$, we denote by $\zeta^{-1}(x)$ the unique real solution of the equation $\zeta(s)=x$.
	\begin{corollary}
		\label{co:avor}
		For an integer $l \geq 1$ we have
		\begin{align*}
			\tfrac{1}{x} \sum_{n \leq x} \tilde{A}_l(n) 
			&\sim \alpha_l x^{\beta_l-1} 
			\quad (x \rightarrow \infty) 
			\quad \text{with} \quad
			\beta_l = \zeta^{-1}(\tfrac{l+1}{l}) 
			\quad \text{and} \quad 
			\alpha_l = -\frac{1}{l \beta_l \zeta'(\beta_l)}
			\text{.}
		\end{align*}
	\end{corollary}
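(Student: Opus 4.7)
The plan is to apply a Wiener--Ikehara Tauberian theorem to the Dirichlet series $\tilde{\mathcal{A}_l}(s) = 1/((l+1) - l\zeta(s))$ from \eqref{eq:dgftA}, generalizing Kalm\'ar's original argument for the case $l=1$. The proof breaks into three steps: isolate the dominant real singularity $\beta_l$, compute its residue, and rule out other singularities on the vertical line $\operatorname{Re}(s) = \beta_l$.

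First, since $\zeta$ decreases strictly from $+\infty$ to $1$ on $(1,\infty)$ and $(l+1)/l > 1$, the equation $\zeta(\beta_l) = (l+1)/l$ has a unique real solution $\beta_l \in (1,\infty)$, namely $\zeta^{-1}((l+1)/l)$. Because $\zeta'(\beta_l) \neq 0$, the denominator $(l+1) - l\zeta(s)$ has a simple zero at $s = \beta_l$, so $\tilde{\mathcal{A}_l}$ has a simple pole there with residue
\begin{align*}
R = \lim_{s \to \beta_l} \frac{s - \beta_l}{(l+1) - l\zeta(s)} = -\frac{1}{l\,\zeta'(\beta_l)},
\end{align*}
which is positive since $\zeta'$ is negative on $(1,\infty)$.

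Next I would verify that $s = \beta_l$ is the only singularity of $\tilde{\mathcal{A}_l}$ on the line $\operatorname{Re}(s) = \beta_l$. Since $\zeta$ is analytic throughout that line (because $\beta_l > 1$), it is enough to show $\zeta(\beta_l + it) \neq (l+1)/l$ for $t \neq 0$; but the triangle inequality gives $|\zeta(\beta_l + it)| \leq \zeta(\beta_l) = (l+1)/l$, with equality forcing all phases $n^{-it}$ to coincide and hence $t = 0$. Consequently $\tilde{\mathcal{A}_l}(s) - R/(s - \beta_l)$ extends holomorphically across this line.

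Since $\tilde{A}_l(n) \geq 0$, Wiener--Ikehara now yields $\sum_{n \leq x} \tilde{A}_l(n) \sim (R/\beta_l)\, x^{\beta_l}$, which upon division by $x$ becomes the claimed asymptotic with $\alpha_l = -1/(l\,\beta_l\,\zeta'(\beta_l))$. The main technical obstacle is precisely the verification of the Wiener--Ikehara hypothesis on the critical abscissa; the local meromorphic continuation of $\tilde{\mathcal{A}_l}$ comes for free from the analyticity of $\zeta$ for $\operatorname{Re}(s) > 1$, but ensuring that no secondary pole sits on $\operatorname{Re}(s) = \beta_l$ is the step that genuinely requires the bound $|\zeta(\sigma+it)| < \zeta(\sigma)$ for $t \neq 0$.
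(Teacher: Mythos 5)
Your proposal is correct and follows essentially the same route as the paper: identify $\beta_l$ as a simple zero of $(l+1)-l\zeta(s)$, compute the residue $-1/(l\zeta'(\beta_l))$ of $\tilde{\mathcal{A}_l}$ there, and invoke Wiener--Ikehara. In fact you are more careful than the paper, which omits the verification that $\beta_l$ is the only singularity on the line $\operatorname{Re}(s)=\beta_l$ (your strict-inequality argument $\abs{\zeta(\beta_l+it)}<\zeta(\beta_l)$ for $t\neq 0$); that step is genuinely needed for the Tauberian theorem to apply.
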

	\begin{proof}
		For $\operatorname{Re}(s) >1$ and $l \geq 1$, we get 
		\begin{align*}
			\mathcal{F}_l(s)
			&:=(l+1)-l \zeta(s)  \\
			&=(l+1) - l \sum_{i=0}^{\infty} \tfrac{1}{i!} \zeta^{(i)}(\beta_l) (s-\beta_l)^i  \\
			&=- l \sum_{i=1}^{\infty} \tfrac{1}{i!} \zeta^{(i)}(\beta_l) (s-\beta_l)^i \\
			&=-l (s-\beta_l) \sum_{i=1}^{\infty} \tfrac{1}{i!} \zeta^{(i)}(\beta_l) (s-\beta_l)^{i-1}
			\text{.}
		\end{align*}
		This shows that $\mathcal{F}_l(s)$ has a simple zero at $s=\beta_l$. Therefore, by \eqref{eq:dgftA}, the dgf $\tilde{\mathcal{A}_l}(s)=1/\mathcal{F}_l(s)$ of $\tilde{A}_l(n)$ has a simple pole at $\beta_l$ with 
		\begin{align*}
			\underset{s=\beta_l}{\operatorname{Res}} \tilde{\mathcal{A}_l} (s)
			&=\frac{1}{\mathcal{F}'_l(\beta_l)}=- \frac{1}{l \zeta'(\beta_l)} 
			\text{.}
		\end{align*}
	    The claim now follows from the Wiener-Ikehara Theorem \cite{Wie32}.
	\end{proof}

	In the next theorem, we derive a formula for the number of $l$-colored ordered distinct factorizations. 
	\begin{theorem}
		\label{th:tBln}
		For integers $n \geq 2$ and $l \geq 1$, we have
		\begin{align}
		\label{eq:tBln}
		\tilde{B}_l(n) = \sum_{k=1}^\Omega k! g_{k,l}(n)
		\text{,}
		\end{align}
		where $g_{k,l}(n)$ is defined recursively by \eqref{eq:recgkl}.	
	\end{theorem}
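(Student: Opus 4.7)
The plan is to prove the formula by a direct bijective argument, showing that every ordered distinct $l$-colored factorization of $n$ corresponds uniquely to an unordered distinct $l$-colored factorization together with a permutation of its parts.

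First I would fix $n \geq 2$ and $l \geq 1$, and partition the set of ordered distinct $l$-colored factorizations of $n$ according to the number $k$ of parts. The key observation is that in a \emph{distinct} $l$-colored factorization, all parts are pairwise distinguishable (two parts are equal only if they share both value and color, which is excluded by distinctness). Therefore, given an unordered distinct $l$-colored factorization of $n$ with exactly $k$ parts, all $k!$ permutations of its parts yield pairwise different ordered tuples, each of which is itself an ordered distinct $l$-colored factorization of $n$ with $k$ parts.

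Conversely, forgetting the order of an ordered distinct $l$-colored factorization with $k$ parts gives an unordered distinct $l$-colored factorization with $k$ parts, and the fiber of this map has exactly $k!$ elements by the distinctness argument above. This establishes that the number of ordered distinct $l$-colored factorizations of $n$ with exactly $k$ parts equals $k! \cdot g_{k,l}(n)$.

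Finally, I would sum over $k$. The upper limit $\Omega = \Omega(n)$ is justified because every factorization of $n$ into parts $\geq 2$ has at most $\Omega(n)$ parts (parts of value $1$ are excluded by convention), so $g_{k,l}(n) = 0$ for $k > \Omega$. Summing over $k$ from $1$ to $\Omega$ then gives $\tilde{B}_l(n) = \sum_{k=1}^\Omega k!\, g_{k,l}(n)$, as desired. The recursive formula for $g_{k,l}(n)$ in \eqref{eq:recgkl} then provides a practical means of computation. There is no serious obstacle here; the only subtle point is the careful use of distinctness to guarantee that all $k!$ orderings are genuinely different.
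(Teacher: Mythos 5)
Your proposal is correct and follows essentially the same argument as the paper: each unordered distinct $l$-colored factorization with $k$ parts gives rise to exactly $k!$ ordered ones, and one sums over $k$ up to $\Omega(n)$. You merely spell out more explicitly why distinctness guarantees the $k!$ orderings are pairwise different and why $g_{k,l}(n)$ vanishes for $k > \Omega$, which the paper leaves implicit.
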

	\begin{proof}
		For $1 \leq k \leq \Omega$, let $d_{1,i_1} \cdots d_{k,i_k}$ be an $l$-colored unordered distinct factorization of $n$, where $i_j \in \{1,\dots,l\}$ ($j=1,\dots,k)$, are indicating the $l$-colors. Since the order of the parts is irrelevant, we can assume that $d_{1,i_1} > \cdots > d_{k,i_k}$. By changing the order of the parts, we can assign $k!$ ordered $l$-colored factorizations to each such factorization. Summing up over $k$ gives \eqref{eq:tBln}.
	\end{proof}

\section{The number of exactly $l$-colored factorizations}
	In the next theorem, we show that the number of factorizations of \textit{exactly} $l$ colors can easily be derived from the number of factorizations of \textit{at most} $l$ colors. 
	
	We say that the sequence $(\Phi_l)_{l \geq 0}$ is the \textit{binomial transform} of the sequence $(\phi_l)_{l \geq 0}$, if 
	$\Phi_l = \sum_{i=0}^{l} \binom{l}{i} \phi_l$ ($l \geq 0$) holds. It then follows that 
	$\phi_l = \sum_{i=0}^{l} (-1)^{l-i} \binom{l}{i} \Phi_l$ ($l \geq 0$), see Sloane and Plouffe \cite[Chapter 2.7]{Slo95}. We set $A_0(n)=a_0(n)=B_0(n)=b_0(n)=\tilde{A}_0(n)=\tilde{a}_0(n)=\tilde{B}_0(n)=\tilde{b}_0(n)=0$ for $n \geq 1$.

	\begin{theorem}
		\label{th:ufcbin}
		$A_l(n)$ is the binomial transform of $a_l(n)$, $B_l(n)$ is the binomial transform of $b_l(n)$, $\tilde{A}_l(n)$ is the binomial transform of $\tilde{a}_l(n)$ and $\tilde{B}_l(n)$ is the binomial transform of $\tilde{b}_l(n)$.
	\end{theorem}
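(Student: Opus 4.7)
The plan is to prove directly the forward identity $A_l(n) = \sum_{k=0}^{l} \binom{l}{k} a_k(n)$ together with its three analogues, since by the binomial transform definition recalled just above the statement, these four identities are exactly what the theorem asserts (and inversion is automatic from that definition). The approach is a one-shot double counting argument based on the palette of colors.

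Fix $n \geq 2$ and regard each $A_l$-factorization as a factorization whose parts are colored from a fixed palette $\{1,2,\dots,l\}$. Partition this set by the subset $S \subseteq \{1,\dots,l\}$ of colors that actually appear in the factorization, and let $N_S(n)$ denote the cardinality of the part of the partition indexed by $S$. Permuting the palette by any bijection of $\{1,\dots,l\}$ is a bijection on the set of all $A_l$-factorizations which preserves the orderedness/unorderedness and the distinctness of the parts, and which sends factorizations with used-color-set $S$ to factorizations with used-color-set $\sigma(S)$. Hence $N_S(n)$ depends only on $k := |S|$, and by choosing $\sigma$ that sends $S$ onto $\{1,\dots,k\}$ we see that this common value is exactly $a_k(n)$. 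Grouping subsets by size yields
\begin{equation*}
A_l(n) \;=\; \sum_{k=0}^{l} \binom{l}{k}\, a_k(n),
\end{equation*}
which is the binomial transform identity; the $k=0$ term vanishes because $a_0(n)=0$.

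The three remaining identities follow from the identical argument, since in each variant the properties \emph{ordered}, \emph{distinct parts}, and \emph{uses exactly the color set $S$} are all preserved under arbitrary permutations of the color labels; the same partition-by-used-color-set collapses to
\begin{equation*}
B_l(n) = \sum_{k=0}^{l} \binom{l}{k} b_k(n), \qquad
\tilde{A}_l(n) = \sum_{k=0}^{l} \binom{l}{k} \tilde{a}_k(n), \qquad
\tilde{B}_l(n) = \sum_{k=0}^{l} \binom{l}{k} \tilde{b}_k(n).
\end{equation*}
There is no substantive obstacle here; the only point worth articulating with care is the color-symmetry bijection that identifies $N_S(n)$ with $a_{|S|}(n)$ (respectively $b_{|S|}(n)$, $\tilde{a}_{|S|}(n)$, $\tilde{b}_{|S|}(n)$), after which the theorem is an immediate bookkeeping exercise.
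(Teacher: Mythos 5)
Your proposal is correct and is essentially the same argument as the paper's: the paper also partitions the $A_l$-factorizations according to which $i$ of the $l$ colors actually appear, counting $\binom{l}{i}a_i(n)$ for each $i$, which is precisely your partition-by-used-color-set combined with the color-symmetry identification of $N_S(n)$ with $a_{|S|}(n)$. You merely make the permutation-of-the-palette bijection explicit where the paper leaves it implicit, and both treatments extend verbatim to the other three counting functions.
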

	\begin{proof}
		We proof the assertion for $(A_l,a_l)$. Let  $n \geq 2$ and $l \geq 1$ be given.
		Among the $A_l(n)$ unordered factorizations of at most $l$ colors, there are exactly $\binom{l}{i} a_i(n)$ unordered factorizations of exactly $i$ of the $l$ colors, for $i=1,\dots,l-1$. Therefore, there are
		$a_l(n) = A_l(n) - \sum_{i=1}^{l-1} \binom{l}{i} a_i(n)$ unordered factorizations of exactly $l$ colors, so that
		\begin{align*}
			A_l(n) &= \sum_{i=0}^{l} \binom{l}{i} a_i(n)   
			\text{.}
		\end{align*}
		The same argument applies for unordered distinct factorizations, ordered factorizations and ordered distinct factorizations of parts with $l$ colors.
	\end{proof}
	
	\begin{remark}
		\label{rm:dgfexact}
		By Theorem \ref{th:ufcbin}, \eqref{eq:Aldgf} and the binomial theorem, we have
		\begin{align*}
			\sum_{n=1}^\infty a_l(n) n^{-s} 
			&= \sum_{n=1}^\infty n^{-s} 
			   \sum_{i=0}^{l} (-1)^{l-i}  \binom{l}{i} A_i(n)  \\
			&= \sum_{i=0}^{l} (-1)^{l-i}  \binom{l}{i}
			   \prod_{n=2}^\infty (1-n^{-s})^{-i} \\
			&= \left( \prod_{n=2}^\infty (1-n^{-s})^{-1}-1 \right)^l
			\text{.}
		\end{align*}
		This last expression is the $l$-th power of the dgf of the series $(\bar{f}(n))_{n\geq 1}$, where $\bar{f}(1)=0$ and $\bar{f}(n)=f(n)$ for $n \geq 2$. Therefore, we find that 
		$a_l$ is the $l$-th Dirichlet power of $\bar{f}$, i.e. $a_l=\bar{f}^{*l}$. An analogous argument gives that 
		$b_l=\bar{g}^{*l}$, where $\bar{g}(1)=0$ and $\bar{g}(n)=g(n)$ for $n \geq 2$.
	\end{remark}

\bibliographystyle{plain}

\bigskip
\hrule
\bigskip

\noindent 2010 {\it Mathematics Subject Classification}: Primary 11A51;
Secondary 05A17.

\noindent \emph{Keywords:}
number of colored factorizations, number of unordered factorizations, multiplicative partition.

\bigskip
\hrule
\bigskip
		
\end{document}